\newtheorem{theorem}{Theorem}[section]
\newtheorem{lemma}[theorem]{Lemma}
\newtheorem{proposition}[theorem]{Proposition}
\renewcommand{\geq}{\geqslant}
\newcommand{\equaldef}{\overset{\mathrm{def}}{=}}
\long\def\@savemarbox#1#2{\global\setbox#1\vtop{\hsize\marginparwidth 
  \@parboxrestore\tiny\raggedright #2}}
\newcommand{\N}{\mathbb N}
\newcommand{\GL}{\mathsf{GL}}
\newcommand{\Sp}{\mathsf{Sp}}
\renewcommand*\env@matrix[1][\arraystretch]{%
\edef\arraystretch{#1}%
\hskip -\arraycolsep
\let\@ifnextchar\new@ifnextchar
\array{*\c@MaxMatrixCols c}}
\date{\today}
\title{Residually finite non linear hyperbolic groups}
\date{\today}
\author[Tholozan]{Nicolas Tholozan}
\author[Tsouvalas]{Konstantinos Tsouvalas}
\begin{document}
\frenchspacing
\maketitle

\begin{abstract} We exhibit the first examples of residually finite non-linear Gromov hyperbolic groups. Our examples are constructed as amalgamated products of torsion-free cocompact lattices in the rank 1 Lie group $\mathsf{Sp}(d,1)$, $d \geq2$, along maximal cyclic subgroups.
\end{abstract}
\section{Introduction}

Recall that a group $G$ is called {\em residually finite} if for every element \hbox{$g\in G\smallsetminus \{1\}$} there exists a finite group $F$ and a group homomorphism $\phi:G \rightarrow F$ with $\phi(g)\neq 1$. A long standing open question of Gromov \cite{Gromov} asks whether every hyperbolic group is residually finite. Since every finitely generated linear group is residually finite by Malcev's theorem, a negative answer to Gromov's question should be considered among non-linear hyperbolic groups. Using the superrigidity theorems of Corlette \cite{Cor} and Gromov--Schoen \cite{GS}, M. Kapovich constructed in \cite{Kap} the first examples of non-linear hyperbolic groups as quotients of cocompact lattices in the simple rank 1 Lie group $\mathsf{Sp}(d,1)$, $d\geq 2$. Other examples of non-linear hyperbolic groups were constructed by Canary, Stover and the second author in \cite{CST} and by the authors of this note in \cite{TT}, as amalgamated products or HNN extensions of quaternionic supperigid lattices along infinite cyclic or quasiconvex free \hbox{subgroups of rank at least two.}

\par In the light of the previous discussion, it is natural to ask which of these classes of non-linear hyperbolic groups can be shown to be residually finite. Unfortunately, this question is difficult in most cases, and deeply connected to other well-known problems in geometric group theory. For instance, residual finiteness of Kapovich's examples is related to the congruence subgroup conjecture for quaternionic lattices, see \cite[\S4]{Lubotzky}. Similarly, residual finiteness of amalgamated products along quasi-convex malnormal subgroups is related to the seperability of these subgroups in the ambient lattice, see \cite[\S2]{LN} and \cite{AGM}. In view of these difficulties, most of these non-linear examples actually provide good candidates for non-residually finite hyperbolic groups.

Nonetheless, in this note we show that one of our constructions in \cite{TT} provides hyperbolic non-linear residually finite groups. They seem to be the first examples of such groups.\footnote{See Mark Sapir's answer to this MathOverflow question : \\ \url{https://mathoverflow.net/questions/396895/examples-of-nonlinear-residually-finite-hyperbolic-groups}.} These are constructed as amalgamated products of cocompact lattices in $\mathsf{Sp}(d,1)$, $d \geq 2$, along maximal cyclic subgroups, hence they are also $\textup{CAT}(0)$ groups.

\begin{theorem} \label{mainthm} Let $\Gamma_1$ and $\Gamma_2$ be two cocompact lattices in $\mathsf{Sp}(d,1)$, $d \geq 2$. There exist finite-index subgroups $\Gamma'_1$ of $\Gamma_1$ and $\Gamma_2'$ of $\Gamma_2$ such that for every non-trivial primitive\footnote{An element $g\in \Gamma$ is called {\em primitive} if whenever $g=h^m$ for some $m\in \mathbb{N}$ then $m=1$.} elements $\gamma_1 \in \Gamma'_1$ and $\gamma_2\in \Gamma_2'$ with different translation lengths in the symmetric space of $\mathsf{Sp}(d,1)$, the amalgamated product \[\Gamma_{1}' \ast_{\gamma_1=\gamma_2} \Gamma_{2}'\] is a residually finite non-linear Gromov hyperbolic group.\end{theorem}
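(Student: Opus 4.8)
The plan is to combine three ingredients: Gromov hyperbolicity and $\textup{CAT}(0)$-ness of the amalgam, non-linearity, and residual finiteness. Hyperbolicity and non-linearity are essentially the content of our earlier work \cite{TT}, so the real novelty is the residual finiteness, which is where the hypotheses ``maximal cyclic'' and ``different translation lengths'' are used. Throughout, write $G = \Gamma_1' \ast_{\gamma_1 = \gamma_2} \Gamma_2'$ for the amalgam, with edge group $C = \langle \gamma_1 \rangle = \langle \gamma_2 \rangle \cong \mathbb{Z}$.

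\emph{Step 1: Hyperbolicity and $\textup{CAT}(0)$-ness.} By Bestvina--Feighn's combination theorem, an amalgam of two hyperbolic groups along a quasiconvex subgroup with the annuli-flare (acylindricity) condition is hyperbolic; here $C$ is malnormal in each $\Gamma_i'$ after passing to suitable finite-index subgroups (a cocompact lattice in $\mathsf{Sp}(d,1)$ is hyperbolic, and maximal cyclic subgroups are quasiconvex; malnormality of a maximal cyclic subgroup up to finite index is standard). This also gives the $\textup{CAT}(0)$ structure, since each $\Gamma_i'$ acts properly cocompactly on the (negatively curved, hence $\textup{CAT}(0)$) quaternionic hyperbolic space $\mathbf{H}^d_{\mathbb{H}}$, and one glues the two along the $\mathbb{Z}$-subgroup to build a $\textup{CAT}(0)$ complex; the ``different translation lengths'' hypothesis is not needed here but is harmless.

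\emph{Step 2: Non-linearity.} Suppose for contradiction that $G$ admits a faithful linear representation $\rho : G \to \mathsf{GL}(N, \mathbb{K})$ for some field $\mathbb{K}$. Restricting to $\Gamma_i'$, which is a lattice in $\mathsf{Sp}(d,1)$, the Corlette \cite{Cor} and Gromov--Schoen \cite{GS} superrigidity theorems force $\rho|_{\Gamma_i'}$ to either have bounded image (hence finite image by discreteness, contradicting faithfulness on the infinite group $\Gamma_i'$) or to essentially come from the defining embedding $\mathsf{Sp}(d,1) \hookrightarrow \mathsf{GL}(N,\mathbb{K})$ up to a compact factor. One then argues, exactly as in \cite{Kap} and \cite{TT}, that the two restrictions cannot be simultaneously extended to a representation of the amalgam: the edge element $\gamma_1 = \gamma_2$ would have to be conjugated, inside $\mathsf{GL}(N, \mathbb{K})$, to an element whose eigenvalue moduli are determined by its $\mathsf{Sp}(d,1)$-translation length — one translation length from the $\Gamma_1$-side and a different one from the $\Gamma_2$-side — which is impossible since the moduli of eigenvalues are a conjugacy invariant. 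This is precisely where ``different translation lengths'' enters. (Care is needed with the possible compact factors and with fields of positive characteristic; these are handled as in \cite{TT}.)

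\emph{Step 3: Residual finiteness.} This is the heart of the matter and the step I expect to be the main obstacle. By a theorem of Baumslag (an amalgam of two residually finite groups along subgroups that are \emph{separable} and whose finite quotients can be ``matched'' is residually finite), it suffices to show: (i) each $\Gamma_i'$ is residually finite (true by Malcev, being linear), and (ii) the cyclic edge subgroup $C = \langle \gamma_i \rangle$ is \emph{separable} in $\Gamma_i'$, and moreover one can find, for every $n$, a finite quotient $q_i : \Gamma_i' \to F_i$ in which the image of $\gamma_i$ has order exactly $n$, with a common choice of $n$ on both sides realizing a compatible amalgamated finite quotient. Cyclic subgroup separability of $\Gamma_i'$ holds because lattices in $\mathsf{Sp}(d,1)$ are linear and, more to the point, because maximal cyclic (hence, after refining, malnormal quasiconvex) subgroups of hyperbolic groups are ``virtually retracts'' in the presence of linearity — concretely, one uses that $\Gamma_i$ has a congruence-type tower of finite quotients in which one controls the order of a fixed element, exploiting that $\gamma_i$ is primitive and $C$ is maximal cyclic so that its image in an appropriate congruence quotient generates a cyclic group of the prescribed order. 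The finite-index subgroups $\Gamma_i'$ in the statement are chosen precisely so that this matching of finite quotients along $C$ can be carried out simultaneously; one then invokes the Baumslag--Tretkoff / Wise-style criterion to conclude that $G$ is residually finite. The delicate point, and the one I would spend the most effort on, is producing the \emph{compatible} finite quotients on the two sides: the orders of the images of $\gamma_1$ and of $\gamma_2$ must be made equal in the respective finite quotients while keeping each quotient faithful enough to detect any prescribed nontrivial element of $G$, and it is to guarantee the existence of such quotients that one passes to the finite-index subgroups $\Gamma_1', \Gamma_2'$ and uses maximality of the cyclic subgroups.

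Combining Steps 1--3 yields that $G = \Gamma_1' \ast_{\gamma_1 = \gamma_2} \Gamma_2'$ is a residually finite, non-linear, Gromov hyperbolic (and $\textup{CAT}(0)$) group, proving Theorem~\ref{mainthm}. \eproof
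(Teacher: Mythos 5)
Your overall architecture matches the paper's: hyperbolicity via Bestvina--Feighn, non-linearity via Corlette/Gromov--Schoen superrigidity and the eigenvalue-modulus vs.\ translation-length identity (both imported from \cite{TT}), and residual finiteness via a Baumslag-type criterion requiring compatible finite quotients of $\Gamma_1'$ and $\Gamma_2'$ in which $\gamma_1$ and $\gamma_2$ acquire equal, arbitrarily large orders. Steps 1 and 2 are essentially correct as stated.

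However, Step 3 contains a genuine gap, and it is exactly the step that constitutes the paper's actual contribution: you assert that one can produce, for every $n$, matched finite quotients in which the images of $\gamma_1$ and $\gamma_2$ have the same order, but you give no mechanism for this beyond invoking a ``congruence-type tower,'' and you explicitly flag the matching as ``the delicate point'' you would still need to work out. The paper's resolution has two ingredients you are missing. First, separability of $\langle\gamma_i\rangle$ is not obtained via virtual retracts; it follows elementarily (Proposition \ref{sep}) from the fact that in a torsion-free hyperbolic group a maximal cyclic subgroup equals its centralizer: any $h\in\bigcap_n\langle\gamma\rangle\Gamma_n$ satisfies $[h,\gamma]\in\bigcap_n\Gamma_n=\{1\}$. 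Second, and crucially, the matching of orders uses Platonov's theorem: for a common prime $p$ one gets towers $\Gamma_i(p^n)$ with trivial intersection whose successive quotients $\Gamma_i(p^n)/\Gamma_i(p^{n+1})$ have exponent $p$. Writing $p^{a_i(n)}$ for the order of $\bar\gamma_i$ in $\Gamma_i/\Gamma_i(p^n)$, the exponent-$p$ property forces $a_i(n+1)-a_i(n)\in\{0,1\}$, so $a_i$ is a surjection onto $\mathbb{N}$; extracting subsequences $k_n$, $l_n$ with $a_1(k_n)=a_2(l_n)=n$ synchronizes the two sides. Note also that arbitrary finite quotients cannot realize arbitrary orders $n$ for the image of $\gamma_i$ (only $p$-power orders occur along these towers), so the ``order exactly $n$ for every $n$'' formulation of your criterion would need to be weakened to what Lemma \ref{RF} actually requires, namely equality of the two indices $[\langle\gamma_i\rangle:\langle\gamma_i\rangle\cap\Gamma_{i,n}]$ along compatible cofinal towers. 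Without Platonov's theorem and the increment-by-at-most-one argument, the proof is not complete.
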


Note that there are several known examples of finitely generated residually finite non-linear groups. These include one-relator ascending HNN extensions of free groups exhibited by Drutu--Sapir in \cite{DS}, the automorphism groups $\textup{Aut}(F_n)$, $n \geq 3$, (see \cite{FP}) and residually finite groups containing infinite torsion $p$ and $q$-subgroups for two distinct primes $p,q$ (see e.g. \cite[Rmk. 3.4]{Nica}). More recently, Chong--Wise in \cite{CW} constructed an uncountable family of finitely generated residually finite groups. Most of them are not linear, since there are only countably many finitely generated linear groups, as Sami Douba pointed out to us. However, none of these examples are hyperbolic.

Let us now provide some details on our construction. The non-linearity of the amalgamated product $\Delta(\gamma_1,\gamma_2)\equaldef \Gamma_1\ast_{\gamma_1=\gamma_2} \Gamma_2$ is proved in \cite{TT} following the point of view of the constructions in \cite{CST}, and relies on the supperigidity theorems of Corlette \cite{Cor} and Gromov--Schoen \cite{GS}. We sketch the proof in Section \ref{s:NonLinear} for completeness.

%

In order to prove the residual finiteness of $\Delta(\gamma_1,\gamma_2)$, one needs to construct finite quotients of $\Gamma_1$ and $\Gamma_2$ in which $\gamma_1$ and $\gamma_2$ have the same arbitrarily large order. To guarantee this property, we appeal to a theorem of Platonov \cite{Platonov} saying that every finitely generated linear group admits an abundance of descending sequences of normal finite-index subgroups, where quotients between successive terms are $p$-groups for some primes $p\in \mathbb{N}$. We deduce the following residual finiteness result:

\begin{theorem}\label{mainthm2} Let $\Gamma_1$ and $\Gamma_2$ be non-elementary Gromov hyperbolic groups which are linear over characteristic zero. There exist finite-index subgroups $\Gamma_1'$ of $\Gamma_1$ and $\Gamma_2'$ of $\Gamma_2$ with the property that for every $\gamma_1\in \Gamma'_1$ and $\gamma_2\in \Gamma_2'$ primitive elements, the amalgamated product $$\Gamma'_1 \ast_{\gamma_1=\gamma_2}\Gamma'_2$$ is a residually finite Gromov hyperbolic group.\end{theorem}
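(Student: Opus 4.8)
The plan is to pass to finite-index subgroups adapted to a single well-chosen prime $p$, and then detect elements of the amalgam inside amalgamated products of finite $p$-groups. First, $\Gamma_1$ and $\Gamma_2$ are finitely generated, so by Selberg's lemma they admit torsion-free finite-index subgroups, and by Platonov's theorem each of the latter admits, for every prime $p$ outside a finite exceptional set, a finite-index subgroup that is residually a finite $p$-group. Choosing a single prime $p$ good for both, we obtain finite-index subgroups $\Gamma_1'\leq\Gamma_1$ and $\Gamma_2'\leq\Gamma_2$ which are torsion-free, non-elementary hyperbolic, and residually-$p$. Torsion-freeness enters through the following standard facts about a primitive $\gamma_i\in\Gamma_i'$: the centralizer of $\gamma_i$ in $\Gamma_i'$ is the maximal cyclic subgroup containing it, hence equals $\langle\gamma_i\rangle$; therefore $\langle\gamma_i\rangle$ is a maximal cyclic subgroup, and so it is malnormal in $\Gamma_i'$; and it is quasiconvex, as is every infinite cyclic subgroup of a hyperbolic group. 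Since the common edge group $\langle\gamma_1\rangle\cong\langle\gamma_2\rangle$ is quasiconvex and malnormal in both vertex groups, the action of $\Lambda:=\Gamma_1'\ast_{\gamma_1=\gamma_2}\Gamma_2'$ on its Bass--Serre tree is acylindrical, so the Bestvina--Feighn combination theorem shows that $\Lambda$ is a (finitely generated) Gromov hyperbolic group; note that this holds for \emph{every} choice of primitive $\gamma_1,\gamma_2$, so it places no further constraint on $\Gamma_1',\Gamma_2'$.

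\emph{The edge group is closed in the pro-$p$ topology.} I claim $\langle\gamma_i\rangle$ is closed in the pro-$p$ topology of $\Gamma_i'$. Since $\Gamma_i'$ is residually-$p$ it embeds in its pro-$p$ completion $\widehat{\Gamma_i'}$, and the closure $Z_i$ of $\langle\gamma_i\rangle$ there is abelian, being the closure of an abelian subgroup; hence $Z_i\cap\Gamma_i'$ is an abelian subgroup of $\Gamma_i'$ containing $\gamma_i$, so it is contained in the centralizer of $\gamma_i$, which is $\langle\gamma_i\rangle$. Thus $Z_i\cap\Gamma_i'=\langle\gamma_i\rangle$, which proves the claim; equivalently, every $x\in\Gamma_i'\smallsetminus\langle\gamma_i\rangle$ is carried outside the image of $\langle\gamma_i\rangle$ by some finite $p$-group quotient of $\Gamma_i'$. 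Moreover, choosing the residually-$p$ filtration $\Gamma_i'=P_0\geq P_1\geq\cdots$ with elementary abelian successive quotients, the order of the image of $\gamma_i$ in $\Gamma_i'/P_k$ tends to infinity and grows by at most one power of $p$ at each step, hence realizes every sufficiently large power $p^N$.

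\emph{Residual finiteness.} Let $g\in\Lambda\smallsetminus\{1\}$, written in amalgam normal form $g=t_1 t_2\cdots t_k\,c$, where $c\in\langle\gamma_1\rangle$ and each $t_j$ is a non-trivial coset representative with consecutive $t_j$ lying in different factors. Using the previous paragraph, choose for each side a finite $p$-group quotient $\Gamma_i'\to\Gamma_i'/M_i$ carrying every $t_j$ that lies in $\Gamma_i'$ outside the image of $\langle\gamma_i\rangle$; let $p^{e_i}$ be the order of the image of $\gamma_i$. Fix $N\geq\max(e_1,e_2)$, large enough that $p^N$ does not divide $m$ when $c=\gamma_1^{m}$, and replace each $M_i$ by its intersection with a filtration term in which $\gamma_i$ has order exactly $p^N$; this preserves the separation of the $t_j$ and ensures that $\gamma_i$ has order exactly $p^N$ modulo $M_i$. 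Now $\langle\gamma_1\rangle$ and $\langle\gamma_2\rangle$ acquire a common image $\mathbb Z/p^N$, so the amalgamation passes to the quotient and produces a surjection
\[
\Lambda\ \longrightarrow\ \Lambda''\ :=\ (\Gamma_1'/M_1)\ \ast_{\mathbb Z/p^N}\ (\Gamma_2'/M_2).
\]
The target $\Lambda''$ is an amalgamated product of two finite groups, hence virtually free and in particular residually finite. The image of $g$ in $\Lambda''$ is the word $\bar t_1\cdots\bar t_k\,\bar c$, which is reduced in the amalgam $\Lambda''$ (each $\bar t_j$ avoids the edge group by construction, and $\bar c\neq 1$ when $k=0$ since $p^N\nmid m$), hence nontrivial by the normal form theorem. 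Composing with a finite quotient of $\Lambda''$ that detects this image gives a finite quotient of $\Lambda$ separating $g$ from $1$; therefore $\Lambda$ is residually finite.

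\emph{Main difficulty.} The crux is the combined requirement in the third paragraph: using a single family of finite $p$-group quotients both to separate prescribed syllables of $g$ from the cyclic edge group and to prescribe the order of $\gamma_i$. The separation rests on the pro-$p$ closedness of $\langle\gamma_i\rangle$, which in turn depends on the fact that primitive elements of torsion-free hyperbolic groups are self-centralizing; the prescription of orders is precisely the input supplied by Platonov's theorem. Once these two ingredients are available, the rest is the familiar interplay between Bass--Serre normal forms and the residual finiteness of amalgams of finite groups, and the hyperbolicity of $\Lambda$ is a direct application of the combination theorem.
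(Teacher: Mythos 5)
Your proposal is correct and follows essentially the same route as the paper: Selberg plus Platonov to get torsion-free residually-$p$ subgroups whose filtration has elementary abelian successive quotients (so the order of $\gamma_i$ in the finite $p$-quotients climbs one power of $p$ at a time and can be synchronized on both sides), self-centralization of primitive elements in torsion-free hyperbolic groups to separate syllables from the edge group, and a surjection onto an amalgam of finite groups, which is virtually free and hence residually finite. Your pro-$p$-completion phrasing of the separability of $\langle\gamma_i\rangle$ is just a repackaging of the paper's Proposition 3.2, and your normal form with an edge-group tail merely merges the paper's two cases into one.
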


\medskip

\noindent {\bf Acknowledgements.}

This project received funding from the European Research Council (ERC) under the European's Union Horizon 2020 research and innovation programme (ERC starting grant DiGGeS, grant agreement No 715982). We would like to thank Francois Dahmani, Sami Douba and Alan Reid for interesting discussions. We are also very grateful to Ga\"etan Chenevier for sharing interesting arguments about orders of elements in finite quotients of arithmetic lattices, which go beyond the goal of this paper.

\section{Non linear amalgamated products of superrigid lattices}  \label{s:NonLinear}

We recall here the construction of non-linear hyperbolic groups from \cite{TT} that will answer Theorem \ref{mainthm}.

\begin{theorem}[Theorem 1.3 of \cite{TT}]\label{nonlinear-amalgam} Let $\Gamma_1$ and $\Gamma_2$ be two lattices in $\mathsf{Sp}(d,1)$, $d\geq 2$. Assume that $\gamma_1\in \Gamma_1$ and $\gamma_2\in \Gamma_2$ are two infinite order elements with different translation lengths in the symmetric space of $\mathsf{Sp}(d,1)$. Then for every field $k$ and $n \in \mathbb{N}$, every representation $\rho:\Gamma_1\ast_{\gamma_1=\gamma_2}\Gamma_2 \rightarrow \GL_r(k)$ \hbox{maps $\Gamma_1$ and $\Gamma_2$ to a finite group.} In particular, the amalgamated product $\Gamma_1\ast_{\gamma_1=\gamma_2}\Gamma_2$ is not linear.\end{theorem}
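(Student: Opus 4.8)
The plan is to combine the superrigidity theorems of Corlette and Gromov--Schoen with an elementary comparison of eigenvalue moduli at the amalgamating element. Write $\Gamma\equaldef\Gamma_1\ast_{\gamma_1=\gamma_2}\Gamma_2$, let $c\in\Gamma$ be the common image of $\gamma_1$ and $\gamma_2$, and fix $\rho\colon\Gamma\to\mathsf{GL}_r(k)$. Replacing $k$ by the (finitely generated) subfield generated by the matrix entries of $\rho(\Gamma)$, I may assume $k$ is finitely generated over its prime field. It suffices to prove that $\rho(\Gamma_1)$ and $\rho(\Gamma_2)$ are finite: non-linearity of $\Gamma$ is then immediate, since a faithful representation of $\Gamma$ would restrict to an infinite linear representation of the infinite lattice $\Gamma_1$.

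First I would dispose of positive characteristic. By the Gromov--Schoen theorem, for every non-archimedean place $v$ of $k$ the closure of $\rho(\Gamma_i)$ in $\mathsf{GL}_r(k_v)$ is compact, so every eigenvalue of every element of $\rho(\Gamma_i)$ is a $v$-adic unit. If $\operatorname{char}k=p>0$ then $k$ has only non-archimedean places, so these eigenvalues are units everywhere, hence lie in $\overline{\mathbb F}_p$ and are roots of unity; thus $\rho(\Gamma_i)$ is a finitely generated torsion linear group, hence finite by the Burnside--Schur theorem. So assume $\operatorname{char}k=0$ and, arguing by contradiction, suppose $\rho(\Gamma_1)$ is infinite.

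Since $\Gamma_1$ has property (T), $\rho(\Gamma_1)$ is infinite with property (T), hence non-amenable, hence not virtually solvable; by the Tits alternative it follows that some element of $\rho(\Gamma_1)$ has an eigenvalue which is not a root of unity, and being a unit this eigenvalue has a Galois conjugate of modulus $>1$, so some embedding $\iota\colon k\hookrightarrow\mathbb C$ makes $\iota\rho(\Gamma_1)$ unbounded. Corlette's archimedean superrigidity now applies: passing to the semisimple part of the Zariski closure (this discards only a finite solvable quotient, which contributes eigenvalues of modulus $1$), one obtains a finite-index subgroup $\Gamma_1^0\le\Gamma_1$ and a nontrivial representation $\widetilde\rho_1\colon\mathsf{Sp}(d,1)\to\mathsf{GL}_r(\mathbb C)$ that agrees, up to conjugation, with $\iota\rho$ on $\Gamma_1^0$. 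As $\gamma_1$ has infinite order it is loxodromic --- I set aside the case that $\gamma_1$ or $\gamma_2$ is parabolic, at most one being so as $\ell(\gamma_1)\ne\ell(\gamma_2)$; that case is easier, since a parabolic element is sent by any such extension to a matrix all of whose eigenvalues have modulus $1$, incompatibly with the loxodromic element obtained from the other factor. So $\iota\rho(c)^N=\widetilde\rho_1(\gamma_1^N)$ has an eigenvalue of modulus $>1$ for suitable $N$ with $\gamma_1^N\in\Gamma_1^0$; hence $\rho(c)$ has infinite order, so $\rho(\Gamma_2)$ is infinite, and re-running the argument with the \emph{same} $\iota$ --- permissible because $\iota\rho(c)$ is already unbounded --- yields a nontrivial $\widetilde\rho_2\colon\mathsf{Sp}(d,1)\to\mathsf{GL}_r(\mathbb C)$ agreeing with $\iota\rho$ on a finite-index $\Gamma_2^0\le\Gamma_2$; enlarge $N$ so that $\gamma_i^N\in\Gamma_i^0$ for $i=1,2$.

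The crux, and the step I expect to be the main obstacle, is the final eigenvalue comparison, which relies on a representation-theoretic rigidity of $\mathsf{Sp}(d,1)$ that genuinely uses $d\ge2$. For a loxodromic $g\in\mathsf{Sp}(d,1)$ and any representation $\sigma$ of $\mathsf{Sp}(d,1)$, the logarithms of the moduli of the eigenvalues of $\sigma(g)$ form the multiset $\{\,c_0\,m\,\ell(g):m\in W(\sigma)\,\}$, where $\ell(g)$ is the translation length of $g$ in the symmetric space, $c_0>0$ depends only on $\mathsf{Sp}(d,1)$ and the normalisation of the metric, and $W(\sigma)\subset\mathbb Z$ is the multiset of restricted weights of $\sigma$ for the one-dimensional $\mathbb R$-split torus, normalised so that the standard $(2d+2)$-dimensional representation has $\pm 1\in W(\sigma)$. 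I would prove that for $d\ge2$ \emph{every} nontrivial representation $\sigma$ of $\mathsf{Sp}(d,1)$ satisfies $1\in W(\sigma)$: for a dominant weight $\lambda=\sum_i a_i\varpi_i\ne0$ of type $C_{d+1}$ one exhibits inside the irreducible module $V_\lambda$ a weight whose restriction to the split Cartan equals $1$, the mechanism being that, since $d+1\ge3$, one may subtract a parity-changing root $e_1\pm e_j$ with $3\le j\le d+1$ from a suitable Weyl translate of $\lambda$; for $d=1$ the statement fails (for instance the adjoint representation of $\mathsf{SL}_2$ has only even restricted weights), in accordance with the hypothesis $d\ge2$. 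Applying this with $\sigma=\widetilde\rho_i$ and $g=\gamma_i^N$: as $1\in W(\widetilde\rho_i)$ and every positive restricted weight is $\ge1$, the least modulus exceeding $1$ among the eigenvalues of $\iota\rho(c^N)$ equals $e^{c_0 N \ell(\gamma_1)}$ when computed through $\widetilde\rho_1$ and $e^{c_0 N \ell(\gamma_2)}$ when computed through $\widetilde\rho_2$. Therefore $\ell(\gamma_1)=\ell(\gamma_2)$, contradicting the hypothesis; so $\rho(\Gamma_1)$, and by symmetry $\rho(\Gamma_2)$, is finite.
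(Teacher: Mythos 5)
Your proposal follows essentially the same route as the paper's sketch (and the detailed proof it cites in \cite[\S 5]{TT}): Gromov--Schoen boundedness at non-archimedean places, Corlette's archimedean superrigidity to extend the restrictions to $\mathsf{Sp}(d,1)$, and the rigidity of restricted weights of $\mathsf{Sp}(d,1)$-representations for $d\geq 2$ to read the translation length off the eigenvalue moduli of $\rho(\gamma_1)=\rho(\gamma_2)$, forcing $\ell(\gamma_1)=\ell(\gamma_2)$. Your version of the last step (every nontrivial representation contains the short restricted root as a restricted weight, so the smallest eigenvalue modulus exceeding $1$ equals $e^{c_0\,\ell(g)}$) is just an equivalent formulation of the paper's statement that $\log\big(\lambda_1(\psi(g))/\lambda_2(\psi(g))\big)=c\,\ell_{\mathbb{H}{\bf H}^d}(g)$ for proximal representations.
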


Let us sketch the proof here for completeness. The details are in \cite[\S5]{TT}. The cornerstone of the proof is the superrigidity theorem of Corlette \cite{Cor} and Gromov--Schoen \cite{GS}, which say that a linear representation of a $\Sp(d,1)$ lattice with infinite image essentially extends to $\Sp(d,1)$. The other ingredient comes from the representation theory of $\mathsf{Sp}(d,1)$: there is a constant $c$ such that, for every proximal continuous homomorphism $\psi:\mathsf{Sp}(d,1)\rightarrow \mathsf{GL}_r(\mathbb{C})$ and every $g\in \Sp(d,1)$, we have
\[\log\frac{\lambda_1\big(\psi(g))}{\lambda_2(\psi(g))}=c \cdot \ell_{\mathbb{H}{\bf H}^d}(g)~,\]
where $\lambda_1(\psi(g))\geq \lambda_2(\psi(g))\geq \cdots \geq \lambda_r(\psi(g))$ are the moduli of the eigenvalues of $\psi(g)$ and $\ell_{\mathbb{H}{\bf H}^d}(g)$ denotes the translation length of $g\in \mathsf{Sp}(d,1)$ acting on the quaternionic hyperbolic space $\mathbb{H}{\bf H}^d$. In the context of Theorem \ref{nonlinear-amalgam}, the superrigidity of $\Gamma_1$ and $\Gamma_2$ and the fact that $\gamma_1,\gamma_2\in \mathsf{Sp}(d,1)$ have distinct translation lengths implies that two linear representations of $\rho_i$ of $\Gamma_i$ with infinite image cannot satisfy
\[\rho_1(\gamma_1) = \rho_2(\gamma_2)~.\]
Thus the amalgamated product does not admit a faithful linear representation.

\section{Residual finiteness of amalgamated products} 

There are certain ways of proving the residual finiteness of amalgamated products of residually finite groups, as soon as compatibility conditions hold for the amalgamated subgroups, see for example \cite[Prop. 1]{Baumslag}. We give here the following refinement of Baumslag's proposition for Gromov hyperbolic groups.

\begin{lemma} \label{RF} Let $\Gamma_1$ and $\Gamma_2$ be two torsion-free Gromov hyperbolic groups and let $\gamma_1\in \Gamma_1$ and $\gamma_2\in \Gamma_2$ be two primitive elements. Suppose that there exist decreasing sequences $\big (\Gamma_{1,n}\big)_{n=1}^{\infty}$ and $\big (\Gamma_{2,n} \big)_{n=1}^{\infty}$ of finite-index normal subgroups of $\Gamma_1$ and $\Gamma_2$ respectively with the following properties:

\begin{itemize}
\item[(i)] $\bigcap_{n=1}^{\infty}\Gamma_{i,n}=\{1\}$ for $i \in \{1,2\}$,

\item[(ii)] $[\langle \gamma_1\rangle : \langle \gamma_1 \rangle \cap \Gamma_{1,n}]= [\langle \gamma_2\rangle : \langle \gamma_2 \rangle \cap \Gamma_{2,n}]$ for every $n \in \mathbb{N}$.
\end{itemize}

Then the amalgamated product $\Gamma_1 \ast_{\gamma_1 =\gamma_2}\Gamma_2$ is residually finite.\end{lemma}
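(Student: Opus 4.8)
The plan is to follow the classical Baumslag strategy for residual finiteness of amalgamated products, but adapted carefully to the hyperbolic setting so that each finite-index-subgroup intersection is still the amalgam of the pieces along a \emph{common} finite cyclic group, and then to iterate and take a Hall-type argument. First I would fix a nontrivial element $w \in \Gamma_1 \ast_{\gamma_1=\gamma_2}\Gamma_2$ and write it in normal form $w = g_1 h_1 g_2 h_2 \cdots$ with $g_j \in \Gamma_1$, $h_j\in\Gamma_2$, the interior syllables lying outside the amalgamated cyclic subgroup $C\equaldef\langle\gamma_1\rangle=\langle\gamma_2\rangle$. The goal is to find, for large $n$, a finite quotient of $\Gamma_1 \ast_C \Gamma_2$ in which the image of $w$ survives. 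The natural candidate is $(\Gamma_1/\Gamma_{1,n}) \ast_{\bar C} (\Gamma_2/\Gamma_{2,n})$, provided this still makes sense as an amalgam, i.e.\ provided the images of $\langle\gamma_1\rangle$ and $\langle\gamma_2\rangle$ in the two quotients are cyclic groups of \emph{the same order} and are identified compatibly; this is exactly what hypothesis (ii) buys us, since $\langle\gamma_i\rangle/(\langle\gamma_i\rangle\cap\Gamma_{i,n})$ is cyclic of order $[\langle\gamma_i\rangle:\langle\gamma_i\rangle\cap\Gamma_{i,n}]$, and (ii) makes these two orders coincide. Primitivity of $\gamma_1,\gamma_2$ ensures $\langle\gamma_i\rangle$ is the full cyclic group we are quotienting, with no extra roots appearing.

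Next I would record the standard structural fact that there is a surjection $\Gamma_1 \ast_C \Gamma_2 \twoheadrightarrow (\Gamma_1/\Gamma_{1,n}) \ast_{\bar C} (\Gamma_2/\Gamma_{2,n})$ whose target is finite (both factors are finite, the edge group is finite, so the amalgam is virtually free, hence residually finite, and in particular we may further pass to a finite quotient). The key point is to choose $n$ large enough that $w$ maps to a nontrivial element. Using (i), choose $n$ so large that every interior syllable $g_j$ avoids $\Gamma_{1,n}$ and every $h_j$ avoids $\Gamma_{2,n}$ — more precisely, so that the images $\bar g_j \in \Gamma_1/\Gamma_{1,n}$ lie outside $\bar C$ whenever $g_j\notin C$, and likewise for $\bar h_j$; this is possible because the set of relevant syllables is finite and $\Gamma_{i,n}\cap C = \langle\gamma_i^{k_n}\rangle$ while the deeper subgroups eventually separate any finite set of points from $C$. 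Then the image of $w$ in the finite amalgam is still in reduced normal form of the same syllable length, hence nontrivial by the normal form theorem for amalgamated products. If $w$ happens to lie in a single factor, say $w\in\Gamma_1\setminus\{1\}$, then by (i) pick $n$ with $w\notin\Gamma_{1,n}$ and the image in $\Gamma_1/\Gamma_{1,n}$, hence in the amalgam, is nontrivial. That exhausts all cases and proves residual finiteness.

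I expect the main obstacle to be verifying cleanly that the natural map to $(\Gamma_1/\Gamma_{1,n}) \ast_{\bar C}(\Gamma_2/\Gamma_{2,n})$ is well-defined \emph{as an amalgam}, i.e.\ that the two copies of the finite cyclic edge group are genuinely identified by an isomorphism compatible with $\gamma_1\mapsto\gamma_2$. This requires that $\gamma_1$ and $\gamma_2$ have images of equal order — guaranteed by (ii) — and that the generator $\gamma_1 + \Gamma_{1,n}$ is sent to $\gamma_2 + \Gamma_{2,n}$, which is forced since the original amalgamation identifies $\gamma_1$ with $\gamma_2$. One subtlety worth spelling out: we need $\langle\gamma_i\rangle\cap\Gamma_{i,n}$ to be generated by a power of $\gamma_i$ (automatic, as it is a subgroup of the infinite cyclic group $\langle\gamma_i\rangle$, here using that $\Gamma_i$ is torsion-free so $\gamma_i$ has infinite order), and primitivity guarantees the quotient cyclic group is generated by the image of $\gamma_i$ itself rather than by some proper root. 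The rest — the normal form argument and the finiteness of the target — is routine, exactly as in \cite{Baumslag}.
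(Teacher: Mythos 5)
Your overall strategy is the same as the paper's: pass to the finite amalgam $(\Gamma_1/\Gamma_{1,n})\ast_{\bar C}(\Gamma_2/\Gamma_{2,n})$, which is well defined thanks to hypothesis (ii), and conclude via the normal form theorem together with the fact that an amalgam of finite groups is virtually free, hence residually finite. The gap is in the sentence where you claim that, for $n$ large, each syllable $g_j\notin C$ has image outside $\bar C$, ``because \dots the deeper subgroups eventually separate any finite set of points from $C$.'' What you actually need is $\bigcap_{n}\langle\gamma_1\rangle\Gamma_{1,n}=\langle\gamma_1\rangle$, i.e.\ that the filtration separates points from the subgroup $C$ and not merely from $\{1\}$, and this does \emph{not} follow from hypothesis (i). For a general finitely generated residually finite group and a normal filtration with trivial intersection, the closure of a cyclic subgroup in the corresponding topology can be strictly larger than the subgroup (non-separable finitely generated, and even cyclic, subgroups exist in finitely generated linear groups); in that case some $g_j\notin C$ would lie in $C\Gamma_{1,n}$ for every $n$, its image would fall into $\bar C$, and the normal form argument would collapse.

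This is precisely the point where the hypotheses ``torsion-free Gromov hyperbolic'' and ``primitive'' must do real work, and your proposal uses them only for the routine observation that $\langle\gamma_i\rangle\cap\Gamma_{i,n}$ is generated by a power of $\gamma_i$. The paper isolates the missing statement as Proposition \ref{sep} and proves it as follows: if $h\in\bigcap_{n}\langle\gamma\rangle\Gamma_n$, write $h=\eta_n\gamma^{s_n}$ with $\eta_n\in\Gamma_n$; normality of $\Gamma_n$ gives $[h,\gamma]=\eta_n\gamma\eta_n^{-1}\gamma^{-1}\in\Gamma_n$ for every $n$, hence $[h,\gamma]=1$ by (i); and since the centralizer of a primitive element of a torsion-free hyperbolic group is the maximal cyclic group $\langle\gamma\rangle$ itself, $h\in\langle\gamma\rangle$. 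Supplying this argument (or an equivalent separability statement for $\langle\gamma_i\rangle$ with respect to the given filtration) closes the gap; the remainder of your proof is correct and matches the paper's.
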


Before we proceed with the proof of this lemma, let us observe that maximal cyclic subgroups of residually finite torsion-free hyperbolic groups are separable.

\begin{proposition} \label{sep} Let $\Gamma$ be a torsion-free Gromov hyperbolic group. Suppose that $\big(\Gamma_n\big )_{n=1}^{\infty}$ is a decreasing sequence of finite-index normal subgroups of $\Gamma$ with $\bigcap_{n=1}^{\infty}\Gamma_n=\{1\}$. Let $\gamma\in \Gamma$ be a primitive element. Then $$\bigcap_{n=1}^{\infty}\langle \gamma \rangle \Gamma_n=\langle \gamma \rangle.$$\end{proposition}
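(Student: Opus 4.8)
The plan is to show that any element $g$ belonging to all of the sets $\langle\gamma\rangle\Gamma_n$ must centralize $\gamma$, and then to identify the centralizer of $\gamma$ with $\langle\gamma\rangle$ using the structure of centralizers in torsion-free hyperbolic groups. The inclusion $\langle\gamma\rangle\subseteq\bigcap_{n\geq1}\langle\gamma\rangle\Gamma_n$ is immediate. For the converse, fix $g\in\bigcap_{n\geq1}\langle\gamma\rangle\Gamma_n$. For each $n$, since $\Gamma_n$ is normal, the image of $g$ in $\Gamma/\Gamma_n$ lies in the cyclic subgroup generated by the image of $\gamma$, and therefore commutes with it; hence the commutator $[g,\gamma]$ lies in $\Gamma_n$. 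As this holds for every $n$ and $\bigcap_{n\geq1}\Gamma_n=\{1\}$, we conclude $[g,\gamma]=1$, that is, $g\in C_\Gamma(\gamma)$.

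It then remains to check that $C_\Gamma(\gamma)=\langle\gamma\rangle$. Being primitive, $\gamma$ is non-trivial, hence of infinite order since $\Gamma$ is torsion-free hyperbolic. The centralizer of an infinite-order element of a hyperbolic group is virtually cyclic and contains the cyclic subgroup it centralizes with finite index; since $\Gamma$ is torsion-free, $C_\Gamma(\gamma)$ is infinite cyclic, say $C_\Gamma(\gamma)=\langle c\rangle$. Then $\gamma=c^{m}$ for some $m\geq1$, and primitivity of $\gamma$ forces $m=1$, so $C_\Gamma(\gamma)=\langle\gamma\rangle$. Combined with the previous paragraph this gives $g\in\langle\gamma\rangle$, proving the proposition. (Equivalently, one can phrase the last point by saying that $C_\Gamma(\gamma)$ is the unique maximal cyclic subgroup of $\Gamma$ containing $\gamma$, and that a primitive element generates such a subgroup.)

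The only ingredient beyond elementary manipulations with cosets is the classical fact that centralizers of infinite-order elements in hyperbolic groups are virtually cyclic — and hence infinite cyclic in the torsion-free case — which is exactly where hyperbolicity is used. I expect the rest to be pure bookkeeping; the one point to handle with care is the correct use of the definition of a primitive element, to upgrade the finite-index statement $[\,C_\Gamma(\gamma):\langle\gamma\rangle\,]<\infty$ to the equality $C_\Gamma(\gamma)=\langle\gamma\rangle$. Note that the finite-index hypothesis on the $\Gamma_n$ is not even needed for this argument; normality and $\bigcap_n\Gamma_n=\{1\}$ suffice.
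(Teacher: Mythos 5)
Your proof is correct and follows essentially the same route as the paper's: both show that any element of $\bigcap_n \langle\gamma\rangle\Gamma_n$ has commutator with $\gamma$ lying in every $\Gamma_n$ (hence trivial), and then identify $C_\Gamma(\gamma)$ with $\langle\gamma\rangle$ using the structure of centralizers in torsion-free hyperbolic groups together with primitivity. Your observation that finite index of the $\Gamma_n$ is not needed, and your more detailed justification that the centralizer of a primitive element is exactly $\langle\gamma\rangle$, are both accurate but do not change the argument.
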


\begin{proof} Let us set $H\equaldef \bigcap_{n=1}^{\infty}\langle \gamma \rangle \Gamma_n$. Let $h \in H$, fix $n \in \mathbb{N}$ and write $h=\eta_n \gamma^{s_n}$ for some $s_n\in \mathbb{N}$ and $\gamma_n \in \Gamma_n$. Observe that $[h,\gamma]=\eta_n \gamma \eta_{n}^{-1}\gamma^{-1} \in \Gamma_n$ since $\Gamma_n$ is a normal subgroup of $\Gamma$. We thus have $[h,\gamma]\in \bigcap_{n=1}^\infty \Gamma_n = \{1\}$. Hence $H$ centralizes the cyclic group $\langle \gamma \rangle$. 
Since $\Gamma$ is torsion-free hyperbolic and $\langle \gamma \rangle$ is maximal cyclic, it is equal to its centralizer \cite{Gromov}. Hence $H = \langle \gamma \rangle$.\end{proof}

In the light of Proposition \ref{sep}, the proof of Lemma \ref{RF} is quite standard. We provide a proof for the reader's convenience.
\medskip

\begin{proof}[Proof of Lemma \ref{RF}] Let us set $\Delta = \Gamma_1 \ast_{\gamma_1=\gamma_2}\Gamma_2$. For every $g \in \Delta$, we shall exhibit a finite-index normal subgroup $N$ of $\Delta$ with $g\in \Delta \smallsetminus N$. 
\par First, let us assume that $g$ does not lie in a conjugate of $\Gamma_1$ or $\Gamma_2$. Up to conjugation, we may write $g=w_1\eta_1\cdots w_s \eta_s$ where $w_1,\ldots, w_s \in \Gamma_1 \smallsetminus \langle \gamma_1 \rangle$ and $\eta_1,\ldots, \eta_s \in \Gamma_2 \smallsetminus \langle \gamma_2 \rangle$. 

By assumption (i) and Proposition \ref{sep} there exists $m \in \mathbb{N}$ large enough such that $w_1,\ldots w_s \in \Gamma_1 \smallsetminus \langle \gamma_1 \rangle \Gamma_{1,m}$ and $\eta_1,\ldots \eta_s \in \Gamma_2 \smallsetminus \langle \gamma_2 \rangle \Gamma_{2,m}$. By assumption (ii) we have $[\langle \gamma_1 \rangle: \langle \gamma_1 \rangle \cap \Gamma_{1,m}]=[\langle \gamma_2 \rangle: \langle \gamma_2 \rangle \cap \Gamma_{2,m}]$, so $\langle \bar \gamma_1  \rangle$ and $\langle \bar \gamma_2\rangle$ are finite cyclic groups of $\Gamma_1/\Gamma_{1,m}$ and $\Gamma_2/\Gamma_{2,m}$ of the same order. Thus there exists a surjective homomorphism 
\[\pi:\Delta \rightarrow \Gamma_1/\Gamma_{1,m}\ast_{\bar \gamma_1 = \bar \gamma_2} \Gamma_2/\Gamma_{2,m}\]
restricting to the quotient morphism $\Gamma_i \to \Gamma_i/\Gamma_{i,m}$ for $i\in \{1,2\}$. By construction, $\pi(w_1),\ldots, \pi(w_s) \in \Gamma_1/\Gamma_{1,m} \smallsetminus \langle \bar \gamma_1 \rangle$ and $\pi(\eta_1),\ldots, \pi(\eta_s) \in \Gamma_2/\Gamma_{2,m} \smallsetminus \langle \bar \gamma_2\rangle$, and we conclude that \[\pi(g)=\pi(w_1)\pi(\eta_1)\cdots \pi(w_s)\pi(\eta_s)\neq 1~.\] Now, an amalgamated product of finite groups is virtually free (see for instance \cite{Serre}) hence residually finite, so there exists a finite group $F$ and a surjective group homomorphism 
\[\varphi: \Gamma_1/\Gamma_{1,m}\ast_{\bar \gamma_1 = \bar \gamma_2 } \Gamma_2/\Gamma_{2,m} \rightarrow F\]
 with $\varphi(\pi(g))\neq 1$. In particular, $g \in \Delta \smallsetminus \textup{ker}(\pi \circ \varphi)$.
 
In the case where $g=whw^{-1}$ for some $w\in \Delta$ and $h\in \Gamma_1$ (resp. $h\in \Gamma_2$) we choose $n\in \mathbb{N}$ large enough such that $h \in \Gamma_1 \smallsetminus \Gamma_{1,n}$ (resp. $h \in \Gamma_2 \smallsetminus \Gamma_{2,n}$). We obtain a surjective group homomorphism $\pi:\Delta \rightarrow \Gamma_1/\Gamma_{1,n} \ast_{\bar \gamma_1 = \bar \gamma_2}\Gamma_2/\Gamma_{2,n}$ with $\pi(h)\neq 1$, hence $\pi(g) \neq 1$. Again, since $\Gamma_1/\Gamma_{1,n} \ast_{\bar \gamma_1 = \bar \gamma_2}\Gamma_2/\Gamma_{2,n}$ is residually finite, $g$ survives in a finite quotient of $\Delta$. 

\end{proof}

\section{Platonov's theorem}

By Lemma \ref{RF}, in order to prove residual finiteness of our amalgamated hyperbolic groups, we need to construct sufficiently many quotients of these groups in which the amalgamated cyclic subgroups have the same order. These will be given by the following theorem of Platonov \cite{Platonov} which shows that linear finitely generated groups are residually $p$-finite for some $p$ (i.e. every non-trivial element survives in a finite quotient which is a $p$-group).

\begin{theorem}\textup{(Platonov \cite{Platonov})} \label{Platonov} Let $k$ be a field of characteristic zero and $\Gamma$ be a finitely generated subgroup of $\GL_r(k)$. Then, for all but finitely many primes $p \in \mathbb{N}$, there exists a decreasing sequence of finite-index normal subgroups $\big(\Gamma(p^n)\big)_{n=1}^{\infty}$ of $\Gamma$ with the following properties:

\begin{itemize}
\item[(i)] $\bigcap_{n=1}^{\infty}\Gamma(p^n)=\{1\}.$

\item[(ii)] for $n\in \mathbb{N}$, every non-trivial element of $\Gamma(p^n)/\Gamma(p^{n+1})$ has order equal to $p$.
\end{itemize}

In particular, $\Gamma$ is virtually residually $p$-finite for all but finitely many primes $p$. \end{theorem}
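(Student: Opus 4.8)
The plan is to derive Platonov's theorem from two standard facts about finitely generated linear groups in characteristic zero. First I would reduce to the case where $\Gamma$ is a finitely generated subgroup of $\GL_r(A)$ for a finitely generated subring $A \subseteq k$: choose generators of $\Gamma$, adjoin all their matrix entries and the inverse of the relevant determinants to $\Z$, obtaining such an $A$. This $A$ is a finitely generated $\Z$-algebra, hence Noetherian and Jacobson, and for all but finitely many primes $p$ one can find a maximal ideal $\mathfrak{m}$ of $A$ lying over $p$ with residue field $A/\mathfrak{m}$ finite of characteristic $p$; equivalently, one uses that $\bigcap_{\mathfrak{m}} \mathfrak{m} = \{0\}$ (Jacobson property) together with the fact that, away from finitely many $p$, the localization $A_{(p)}$ still embeds in a suitable completion. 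The upshot of this first step is the existence of a ring homomorphism $A \to \mathcal{O}$ with $\mathcal{O}$ the ring of integers of a $p$-adic field (or more simply $A \hookrightarrow \widehat{A}_{\mathfrak{m}}$, a complete local Noetherian ring with finite residue field of characteristic $p$), such that the induced map $\Gamma \to \GL_r(\mathcal{O})$ is injective.

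The second and main step is the congruence filtration argument inside $\GL_r(\mathcal{O})$. Let $\mathfrak{p}$ be the maximal ideal of $\mathcal{O}$, and define $\Gamma(p^n)$ to be the intersection of $\Gamma$ with the principal congruence subgroup $K_n = \ker\big(\GL_r(\mathcal{O}) \to \GL_r(\mathcal{O}/\mathfrak{p}^n)\big)$. Each $K_n$ is normal of finite index in $\GL_r(\mathcal{O})$ (since $\mathcal{O}/\mathfrak{p}^n$ is finite), so $\Gamma(p^n)$ is normal of finite index in $\Gamma$; the sequence is decreasing; and $\bigcap_n K_n = \{1\}$ because $\bigcap_n \mathfrak{p}^n = \{0\}$ by Krull's intersection theorem, giving property (i). For property (ii), the classical computation of the layers of the congruence filtration gives a group isomorphism
\[
K_n/K_{n+1} \;\xrightarrow{\ \sim\ }\; \mathfrak{p}^n/\mathfrak{p}^{n+1} \otimes_{\mathcal{O}/\mathfrak{p}} \mathfrak{gl}_r(\mathcal{O}/\mathfrak{p}),
\qquad I + a X \mapsto \overline{a}\otimes \overline{X},
\]
valid as soon as $p > 2$ or $n \geq 1$ is chosen appropriately (one may need to start the filtration at $K_{n_0}$ for a fixed $n_0$ to avoid the prime $2$ subtlety, which does not affect the conclusion). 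The right-hand side is an elementary abelian $p$-group, since $\mathcal{O}/\mathfrak{p}$ has characteristic $p$; hence every non-trivial element of $K_n/K_{n+1}$, and a fortiori of the subquotient $\Gamma(p^n)/\Gamma(p^{n+1})$, has order exactly $p$. This yields (ii), and the ``virtually residually $p$-finite'' conclusion follows since $\Gamma/\Gamma(p^{n+1})$ is then a finite group built from iterated extensions of elementary abelian $p$-groups together with the finite group $\Gamma/\Gamma(p^{n_0})$, which in particular has a normal $p$-subgroup ($\Gamma(p^{n_0})/\Gamma(p^{n+1})$) of finite index whose quotient is independent of $n$.

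I expect the main obstacle to be the careful bookkeeping in the first step: making precise, for all but finitely many $p$, the embedding of $\Gamma$ into $\GL_r$ of a complete local ring with finite residue field of characteristic $p$. One must check that only finitely many primes are lost — those dividing the relevant determinants, or those for which no maximal ideal of $A$ has residue characteristic $p$, which by generic flatness and the Noether normalization / Nullstellensatz over $\Z$ is a finite set. The congruence-filtration layer computation in the second step is routine linear algebra over $\mathcal{O}/\mathfrak{p}$, the only care being the well-known exceptional behaviour at $p = 2$, which is handled simply by truncating the filtration at a suitable starting index. Everything else — normality, finite index, trivial intersection — is immediate from the finiteness of $\mathcal{O}/\mathfrak{p}^n$ and Krull's theorem.
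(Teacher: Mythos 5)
Your proposal is correct and follows essentially the same route as the paper, which likewise takes the finitely generated domain $A$ spanned by the matrix entries, picks a maximal ideal $I$ with finite residue field of characteristic $p$ (available for all but finitely many $p$), and defines $\Gamma(p^n)$ as the kernel of $\Gamma\subset \GL_r(A)\to \GL_r(A/I^n)$, with the congruence-filtration layers being elementary abelian $p$-groups. Your caution about $p=2$ is unnecessary here, since the layer computation $K_n/K_{n+1}\hookrightarrow \mathfrak{gl}_r(\mathfrak{p}^n/\mathfrak{p}^{n+1})$ is valid for every $n\geq 1$ and every $p$; the $p=2$ subtleties only concern torsion-freeness of $K_1$ itself, which is not needed for the statement.
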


The sequence $\big(\Gamma(p^n)\big)_{n=1}^{\infty}$ is constructed in the following way: let $A$ be the domain generated by the matrix entries of elements of (a finite generating set of) $\Gamma$ and let $I\subset A$ be a maximal ideal such that $A/I$ is a finite field of characteristic $p$. The finite index normal subgroup $\Gamma(p^n)$ is then defined as the kernel of the morphism 
\[\Gamma \subset \GL_r(A) \to \GL_r(A/I^n)~.\]
A detailed proof of Platonov's theorem is given in \cite[Thm. 3.1]{Nica}.

\section{Proof of the theorems}

We now have all the tools to conclude the proof of Theorem \ref{mainthm2} and Theorem \ref{mainthm}.

\begin{proof}[Proof of Theorem \ref{mainthm2}] 

Note first that, by Malcev's theorem, $\Gamma_1$ and $\Gamma_2$ are residually finite, and by Selberg's lemma, up to passing to finite-index subgroups, we may assume that $\Gamma_1$ and $\Gamma_2$ are torsion-free. By Theorem \ref{Platonov} there exists a prime $p\in \mathbb{N}$ and descending sequences $\big\{\Gamma_1(p^n)\big\}_{n=1}^{\infty}$ and $\big\{\Gamma_2(p^n)\big\}_{n=1}^{\infty}$ of finite-index normal subgroups of $\Gamma_1$ and $\Gamma_2$ respectively such that:

\begin{itemize}
\item[(i)] $\bigcap_{n=1}^{\infty}\Gamma_1(p^n)=\{1\}$ and $\bigcap_{n=1}^{\infty}\Gamma_2(p^n)=\{1\}$,

\item[(ii)]
for $n\geq 1$ and $i\in \{1,2\}$, every non-trivial element of $\Gamma_{i}(p^n)/\Gamma_{i}(p^{n+1})$ \hbox{has order equal to $p$.}
\end{itemize}

Let us set $\Gamma_1' = \Gamma_1(p)$ and $\Gamma_2'= \Gamma_2(p)$. Let $\gamma_1\in \Gamma_1'$ and $\gamma_2\in \Gamma_2'$ be two non-trivial primitive elements. We claim that the amalagamated product \[\Delta(\gamma_1,\gamma_2)\equaldef \Gamma_1'\ast_{\gamma_1=\gamma_2}\Gamma_2'\] is a residually finite hyperbolic group.

Since $\Gamma_i$ is torsion-free for $i\in \{1,2\}$, $\langle \gamma_i\rangle$ is malnormal in $\Gamma_i(p)$ and the hyperbolicity of $\Delta(\gamma_1,\gamma_2)$ follows immediately by the Bestvina--Feighn combination theorem \cite{BF}. 

Observe that for every $n\in \mathbb{N}$ and $i\in \{1,2\}$ the order of $\bar \gamma_{i}$ in $\Gamma_i/\Gamma_i(p^n)$ is a power of $p$ since $\gamma_i\in \Gamma_i(p)$ and $\Gamma_i(p)/\Gamma_i(p^n)$ is a finite $p$-group. For every $n\in \mathbb{N}$ and $i\in \{1,2\}$, define $a_i(n)$ as the integer such that $\bar \gamma_i$ has order $p^{a_i(n)}$ in $\Gamma_i/\Gamma_i(p^n)$. Since $\Gamma_i(p^n)$ is a decreasing sequence of normal subgroups with $\bigcap_{n=1}^{\infty}\Gamma_i(p^n)=\{1\}$, the sequence $a_i(n)$ is increasing and unbounded. Moreover, by Property (ii) of Platonov's theorem, $\gamma_i^{p^{a_i(n)}} \in \Gamma_i(p^n)$ has order $1$ or $p$ in $\Gamma_i(p^n)/\Gamma_i(p^{n+1})$, which implies that $a_i(n+1)-a_i(n) \in \{0,1\}$ for $i\in \{1,2\}$. We conclude that $a_i: \N \to \N$ is surjective for $i \in \{1,2\}$.

Let $(k_n)_{n=1}^{\infty}$ and $(l_n)_{n=1}^{\infty}$ be increasing sequences such that \[a_1(k_n)=a_2(l_n)=n~.\]
Finally, set $\Gamma'_{1,n} = \Gamma_1(p^{k_n})$ and $\Gamma'_{2,n} = \Gamma_2(p^{l_n})$. By the definition of $k_n$ and $l_n$, the order of $\bar \gamma_i$  in $\Gamma_i'/\Gamma'_{i,n}$ is $p^n$ for $i\in \{1,2\}$. Since $(k_n)$ and $(l_n)$ are unbounded, we have $\bigcap_{n=1}^{\infty} \Gamma_{i,n} = \{1\}$. The decreasing sequences of subgroups $\big (\Gamma'_{i,n}\big )_{n=1}^{\infty}$ satisfy hypotheses (i) and (ii) of Lemma~\ref{RF} and we conclude that $\Delta(\gamma_1,\gamma_2)$ is residually finite. \end{proof}

Now Theorem \ref{mainthm} follows straightforwardly from Theorem \ref{mainthm2} and Theorem \ref{nonlinear-amalgam}.

\begin{proof}[Proof of Theorem \ref{mainthm}] Let $\Gamma_1$ and $\Gamma_2$ be two cocompact lattices of $\mathsf{Sp}(d,1)$, $d\geq 2$. Since $\Gamma_1$ and $\Gamma_2$ are linear over $\mathbb{R}$, by Theorem \ref{mainthm2} there exist finite-index subgroups $\Gamma'_1$ and $\Gamma_2'$ of $\Gamma_1$ and $\Gamma_2$ respectively such that for every $\gamma_1\in \Gamma'_1$ and $\gamma_2\in \Gamma_2'$ primitive elements, the group $\Delta(\gamma_1,\gamma_2)$ is a residually finite Gromov hyperbolic group. On the other hand, when the translation lengths of $\gamma_1$ and $\gamma_2 \in \mathsf{Sp}(d,1)$ are different, the group $\Delta(\gamma_1,\gamma_2)$ is non-linear by Theorem \ref{nonlinear-amalgam}.

\end{proof}

\end{document}